\documentclass[notitlepage,11pt,reqno]{amsart}

\usepackage{amsmath,amsfonts,amsopn,amssymb,esint,nicefrac}
\usepackage[final]{hyperref}
\usepackage[T1]{fontenc}
\usepackage{esint}

\usepackage[utf8]{inputenc}
\usepackage{apptools}
\AtAppendix{\counterwithin{lemma}{section}} 
\usepackage[margin=1in]{geometry}
\allowdisplaybreaks
\raggedbottom

\newcommand{\stkout}[1]{\ifmmode\text{\sout{\ensuremath{#1}}}\else\sout{#1}\fi}

\newcommand{\Rn}{\mathbb{R}^N}
 
  \newcommand{\eps} {\varepsilon_0}

\usepackage{graphicx,enumitem,dsfont,upgreek}
 \usepackage[dvips]{epsfig}
 \usepackage[mathscr]{eucal}
\usepackage{amscd}
\usepackage{amssymb}
\usepackage{amsthm}
\usepackage{amsmath}
\usepackage{latexsym}
\usepackage{dsfont}
\usepackage{upref}
\usepackage{hyperref}

\usepackage{enumitem}

\usepackage{color}
\theoremstyle{plain}

\newtheorem{thm}{Theorem}[section]
\theoremstyle{plain}
\newtheorem{lemma}[thm]{Lemma}

\newtheorem{cor}[thm]{Corollary}

\theoremstyle{definition}
\newtheorem{defi}{Definition}[section]
\newtheorem{rem}{Remark}[section]
\newtheorem*{maintheorem*}{Main Theorem}
\newtheorem*{maincorollary*}{Main Corollary}
%
{%
\setcounter{enumi}{0}

\begin{enumerate}}%
{\end{enumerate} }

%
{%
\setcounter{enumi}{0}

\begin{enumerate}}%
{\end{enumerate} }

\newcommand{\abs}[1]{\ensuremath{\left|#1\right|}}
\newcommand{\Om}{\ensuremath{\Omega}}

\definecolor{darkgreen}{rgb}{0.09, 0.65, 0.27}
\newcommand{\cred}{\color{red}}

\newcommand{\cb}{\color{blue}}

\newcommand{\dist}{\rm dist}
\newcommand{\na}{\nabla}

\newcommand{\R}{\ensuremath{\mathbb{R}}}

\newcommand{\Grad}{\mathrm{\nabla}}




\newcommand{\dy}{\ensuremath{\, {\rm d}y}}
\newcommand{\dz}{\ensuremath{\, {\rm d}z}}


\numberwithin{equation}{section} \allowdisplaybreaks

\usepackage{cancel,pdfsync}

\title[Liouville properties with gradient nonlinearity]{Liouville results for supersolutions of fractional 
$p$-Laplacian equations with gradient nonlinearities}

\begin{document}

\author{Mousomi Bhakta, Anup Biswas and Aniket Sen}
\address{Department of mathematics, Indian Institute of Science Education and Research Pune, Dr.\
Homi Bhabha Road, Pune 411008, India}
\email{mousomi@iiserpune.ac.in, \, anup@iiserpune.ac.in,\, 
aniket.sen@students.iiserpune.ac.in}

\begin{abstract}
We prove that any nonnegative viscosity solution of the inequality
$$(-\Delta_p)^s u \geq u^{t} |\na u|^{m}\quad \text{ in }\; \Rn,\; N\geq 2,$$
must be constant. This result holds for parameters $p\in (1, \infty), s\in (0, 1)$, 
$t, m\geq 0$, satisfying
$$t (N-sp) + m(N-(sp-p+1)) < N(p-1),$$
with the additional condition that either $m\leq p-1$ if $p-1<sp$, or $m<sp$ if $p-1\geq sp$.

\end{abstract}

\keywords{Positive supersolution, quasilinear equations, nonlocal equations}
\subjclass[2020]{Primary: 35B53, 35J60, 35J92, 35J70}

\maketitle

\medskip

\noindent

\section{Introduction}
This paper is devoted to establishing a Liouville theorem for inequalities involving the fractional $p$-Laplacian with a gradient-dependent nonlinearity. Our principal result is the following.
\begin{thm}\label{gradnon} 
Suppose that $0<s<1<p<\infty$, $N\geq 2$, $t\geq 0$ and
\begin{align}\label{ineq:m}
 0\leq m\begin{cases}\leq p-1  \quad\mbox{if}\quad \frac{sp}{p-1} >1,\\
<sp \quad\mbox{if}\quad \frac{sp}{p-1} \leq 1.
\end{cases}
\end{align}
Moreover, let $N>sp$ and
\begin{equation}\label{algineq}
t (N-sp) + m\bigl(N-(sp-p+1)\bigr) < N(p-1).
\end{equation} 
Then, any nonnegative viscosity solution to
\begin{equation}\label{E1.1A}
(-\Delta_p)^s u \geq u^{t} |\na u|^{m}\quad  \text{ in }\; \Rn
\end{equation}
is a constant. 
\end{thm}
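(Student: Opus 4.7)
The proof is by contradiction: suppose $u$ is a nonnegative viscosity supersolution of \eqref{E1.1A} that is not constant. My plan combines a quantitative pointwise lower bound on $u$, obtained from the strong minimum principle together with a radial-barrier comparison, with a test-function upper bound on a weighted integral of $u^t|\na u|^m$, and exploits the subcriticality condition \eqref{algineq} to let the two estimates collide as $R\to\infty$. After translating if necessary, the strong minimum principle for $(-\Delta_p)^s$ applied to the inequality $(-\Delta_p)^s u\geq 0$ forces either $u\equiv\inf u$ (in which case $u$ is constant and we are done) or $u>\inf u$ everywhere; a radial comparison built from the critical homogeneity exponent $\alpha_0=(N-sp)/(p-1)$, i.e., a smooth modification of $|x|^{-\alpha_0}$, then produces a quantitative lower bound of the form $\inf_{B_R} u\geq c R^{-\alpha_0}$ along a sequence of radii $R\to\infty$.

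The heart of the proof is the test-function argument. With $\eta_R$ a standard smooth cutoff equal to $1$ on $B_R$ and supported in $B_{2R}$, I would multiply \eqref{E1.1A} by a test function of the form $\eta_R^\gamma(u+\varepsilon)^{-\sigma}$ for suitably tuned $\gamma,\sigma>0$, the admissibility of which in the viscosity sense has to be justified through inf/sup-convolution approximations. Expanding the double integral for $(-\Delta_p)^s u$ in the style of a nonlocal Caccioppoli identity, and absorbing cross terms by Young's inequality, yields on the left a fractional seminorm (or $p$-energy) of a power of $u$, modulo a term scaling like $R^{N-sp}$. On the right, a second Young inequality splits $u^{t-\sigma}|\na u|^m$ into (i) a small constant times a term absorbable into the left-hand seminorm or $p$-energy, and (ii) a residual of the form $R^{-a}\int_{B_{2R}} u^{b}$. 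The dichotomy \eqref{ineq:m} is precisely what makes this absorption possible: when $sp>p-1$, absorbing into the local $p$-energy requires $m\leq p-1$, while in the regime $sp\leq p-1$ the only available absorption is into the fractional seminorm itself, with sharp threshold $m<sp$. The algebraic inequality \eqref{algineq} is exactly the statement that the net exponent of $R$ in the residual is negative, so letting $R\to\infty$ forces $\int_{\Rn}u^t|\na u|^m=0$, which contradicts non-constancy through the lower bound from the first step.

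The principal obstacle I expect is the rigorous implementation of these manipulations in the viscosity framework for the nonlocal, nonlinear operator $(-\Delta_p)^s$. Unlike the local $p$-Laplacian, there is no pointwise product rule, so the Caccioppoli-type test-function identity has to be derived from the two-point expansion of the double integral directly, carefully handling the principal-value sign conventions; moreover $|\na u|^m$ itself is only defined in the viscosity sense, so a regularization scheme (for instance inf/sup-convolution together with a uniformly elliptic, nondegenerate approximation of $(-\Delta_p)^s$) is needed to make each quantity meaningful, and every estimate must pass uniformly to the limit. A further technical hurdle is controlling the long-range tail of the double integral at distances $|y|\gg R$, which must be absorbed either via an a-priori polynomial growth bound on $u$ or through a dyadic iteration over annuli. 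Keeping all these estimates consistent, particularly in the borderline regime $sp=p-1$ where the two branches of \eqref{ineq:m} meet, is the most delicate part of the argument.
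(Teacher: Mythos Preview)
Your proposal takes a fundamentally different route from the paper, and it has a genuine gap at its core. The paper's proof never integrates the inequality and never uses test functions of the form $\eta_R^\gamma(u+\varepsilon)^{-\sigma}$: it is a pure comparison/barrier argument. After the strong maximum principle reduces to the case $u>0$, Lemma~\ref{L1.2} gives the decay bound $u(x)\geq\kappa|x|^{-\theta}$ for any $\theta>\frac{N-sp}{p-1}$, and then the key Lemma~\ref{L1.3} \emph{iterates} this lower bound down to arbitrarily small $\theta>0$ by repeatedly comparing $u$ with scaled radial barriers $c\,\varphi_{\sigma_{i+1}}$ on exterior domains, using at each step the inequality $(-\Delta_p)^s u \geq C_\sigma^t|x|^{-t\sigma}|\na u|^m$ obtained from the previous lower bound. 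Once the decay exponent $\theta$ is small enough, a final comparison with $\rho_\upkappa-\varepsilon|x|^{\bar\theta}$ shows $u\geq\inf u$ globally, and the strong maximum principle forces constancy. The restriction $m\leq p-1$ enters so that the rescaling $\varphi\mapsto\kappa\varphi$ with $\kappa<1$ preserves strict subsolution status (since $\kappa^{p-1}\leq\kappa^m$), and $m<sp$ in the case $sp\leq p-1$ is what makes the final barrier exponent choice \eqref{eqneg} possible; neither condition arises from absorbing a term into a seminorm or $p$-energy as you conjecture.

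Your integral/Caccioppoli approach, by contrast, requires giving meaning to $\int u^t|\na u|^m$ and to the act of multiplying \eqref{E1.1A} by $\eta_R^\gamma(u+\varepsilon)^{-\sigma}$ in the viscosity framework, and this is precisely where the argument is incomplete. For a viscosity supersolution of $(-\Delta_p)^s u\geq u^t|\na u|^m$, the quantity $|\na u|$ need not exist pointwise, and there is no established mechanism to convert the viscosity inequality into a weak (integrated) inequality when a gradient term sits on the right-hand side; inf/sup-convolutions alter both the nonlocal operator and the gradient term in ways that are not known to be uniformly controllable here. The paper states explicitly in the introduction that the integral-estimate methods (i)--(iii) from the local theory ``rely heavily on the local nature of the operator and do not translate readily to the nonlocal framework of the fractional $p$-Laplacian.'' Unless you can supply the missing viscosity-to-weak bridge and make the right-hand side integral well defined, the proposal is a heuristic programme rather than a proof; the paper's barrier-iteration method sidesteps all of these issues by working entirely at the level of pointwise comparison.
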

Here $(-\Delta_p)^s$ denotes the fractional $p$-Laplacian operator defined by
$$(-\Delta_p)^s u(x)={\rm PV}\int_{\Rn} |u(x)-u(y)|^{p-2}(u(x)-u(y))\frac{\dy}{|x-y|^{N+sp}}.$$
\begin{rem}
  For $N\leq sp$, it is known from \cite[Theorem~1.2]{DPQ25} that any nonnegative viscosity solution to $(-\Delta_p)^s u\geq 0$ in $\Rn$ is constant. 
\end{rem}

\begin{rem}
The case $m=0$ which corresponds to supersolutions of $p$-fractional Lane-Emden equation,  Theorem~\ref{gradnon} has been very  recently proved by Del Pezzo and Quaas \cite[Theorem 1.3]{DPQ25}. The proof of \cite{DPQ25} relies on a technique based on maximum principle, originally introduced in \cite{CL00}. But, for $t>0$, the method of \cite{DPQ25} cannot be applied to our model due to the presence of gradient nonlinearity. In this article, we overcome this obstacle by developing a new, self-contained approach centered on an iterative argument (see Lemma~\ref{L1.3}).
\end{rem}

Liouville property for the differential operators are widely studied not only because of its applications in regularity theory and blow-up analysis, but also due to its intrinsic theoretical interest. For $s=1$, there are quite a few works
dealing with Liouville results for inequalities of type 
\eqref{E1.1A}. See, for instance, \cite{ACQ,BBF25,BGV19,BPGQ16,CM97,CHZ22,F09,MP01,MP99}. 
It is interesting to observe that these problems belong to the class of Liouville-type results asserting that bounded-above subharmonic functions must be constant. This class of Liouville theorems has been investigated in a much broader context and established using a variety of methods, Hadamard three circle
theorem in \cite{PW-book},
Caccioppoli estimates in \cite{Moon}, stochastically incompleteness property in \cite{Gri99}, and the divergence theorem in \cite{RS01} (see also \cite{PRS08} for several associated equivalence relations).
The recent survey \cite{CG23} provides a comprehensive overview of Liouville theorems with gradient nonlinearities for classical elliptic operators (that is, $s=1$). 

In contrast, for the nonlocal setting ($s \in (0,1)$), the literature is less developed, with most existing results, such as those in \cite{B19, DKK08, DPQ25, QX16}, addressing the case $m = 0$ (i.e., without a gradient term). The case involving a gradient nonlinearity has remained open and is notably challenging, as highlighted by the open problems presented in \cite{CG23}. 
A recent innovative approach based on the Ishii-Lions method was introduced in \cite{BQT} to tackle such problems but for equations. To the best of our knowledge, Theorem \ref{gradnon} provides the first Liouville results for inequalities involving fractional $p$-Laplacian in the presence of a gradient nonlinearity. 

{The primary difficulty stems from the inapplicability of techniques that are standard in the local setting. Let us briefly review some well-established methods for the case $s=1$. The authors in \cite{BPGQ16} construct suitable radial solutions by solving certain ordinary differential equations and then apply the maximum principle to establish the Liouville property. In \cite{CHZ22}, a transformation of the form \(v = u^b\) is employed to eliminate the zero-order term (corresponding to \(t=0\)), followed by the nonlinear capacity method of Mitidieri (see \cite{BGV19,MP01}). The nonlinear capacity approach of \cite{MP99,MP01} uses test functions of the form $u^{-d}\xi^k$, for a cut-off function $\xi$ and $d>0$, and derives integral estimates. For the case $t=0$, \cite{Gof23} applies the divergence theorem, in the spirit of \cite{RS01}, to establish the Liouville property. Another approach, based on the strong maximum principle and Lyapunov functions, was introduced in \cite{Bardi2016} for Pucci-type subadditive operators; see also \cite{Bardi2022} for an extension to the Heisenberg group.

These methods rely heavily on the local nature of the operator and do not readily extend to the nonlocal framework of the fractional $p$-Laplacian, making Theorem~\ref{gradnon} a genuinely nontrivial extension. Moreover, our operators do not satisfy the structural conditions required in \cite{Bardi2016,Bardi2022}. We also note that condition~\eqref{algineq} is the natural nonlocal analogue of the well-known subcritical condition in the local case; see \cite[Theorem~15.1]{MP01}, as well as \cite[Theorem~2.1]{BGV19} and \cite[Theorem~1.2]{CHZ22}.

At this point, we briefly describe the strategy of the proof. Using the maximum principle, we first show in Lemma~\ref{L1.2} that for any 
$\sigma_1 \in \big(\frac{N-sp}{p-1}, \frac{N}{p-1}\big)$, the solution satisfies
$$
u(x) \gtrsim \min\{1, |x|^{-\sigma_1}\}.
$$
Substituting this bound into \eqref{E1.1A}, we obtain
$$
(-\Delta_p)^s u(x) \gtrsim |x|^{-t\sigma_1} |\nabla u|^{m}.
$$
We then construct a strict subsolution of the form $\varphi_{\sigma_2}(x) = c |x|^{-\sigma_2}$ for $|x| \ge 1$, with a suitable positive constant $c$ and some $0 < \sigma_2 < \sigma_1$. A further application of the comparison principle yields
\[
u(x) \gtrsim \min\{1, |x|^{-\sigma_2}\}.
\]
Using \eqref{ineq:m}--\eqref{algineq}, we show that this procedure can be repeated with $\sigma_2$, leading to a $\sigma_3$ with $0 < \sigma_3 < \sigma_2$ such that
\[
u(x) \gtrsim \min\{1, |x|^{-\sigma_3}\}.
\]
Iterating this scheme, we eventually obtain
$$
u(x) \gtrsim \min\{1, |x|^{-\theta}\}
$$
for any small $\theta > 0$. A final application of the maximum principle then yields the desired result.

We conclude the introduction with the following remark, which highlights some directions for future investigation.
\begin{rem}
The condition \eqref{ineq:m} makes Theorem~\ref{gradnon} somewhat more restrictive than the corresponding results known for the case $s=1$, which are obtained under the sole assumption \eqref{algineq}. This additional requirement appears to stem from a technical constraint arising in the proof of our key Lemma~\ref{L1.3}, where the condition is needed to carry out the iteration argument. It is plausible that the conclusion of Theorem~\ref{gradnon} remains valid under the weaker assumption \eqref{algineq} alone; however, establishing this presently remains an open problem. Furthermore, our method does not address the borderline case of equality in \eqref{algineq}, namely
\[
t (N-sp) + m\bigl(N-(sp-p+1)\bigr) = N(p-1).
\]
We note that, in the local case $s=1$, the Liouville theorem continues to hold under this condition provided $m>p-1$; see \cite{BBF25,Gof23}.

In the case $t=0$, equation \eqref{E1.1A} reduces to $(-\Delta_p)^s u \ge |\nabla u|^m$, and it is natural to expect a Liouville-type result without any lower bound assumption on $u$ (compare with \cite{BBF25,Gof23} for $s=1$). However, our present approach relies on a one-sided bound on $u$, and we do not know at this stage how to remove this assumption.
\end{rem}
}


The rest of the article is organized as follows. Section~\ref{S-prim} collects necessary preliminary results. And 
Section~\ref{S-main} is devoted to the proof of our key iterative lemma and the completion of the proof of Theorem~\ref{gradnon}.

%

\section{Preliminary}\label{S-prim}
In this section, we provide some preliminary  estimates which will be used in the proof of Theorem~\ref{gradnon}.  The solution to \eqref{E1.1A} is understood in the viscosity sense, which we recall from  \cite{KKL19}. 
First, we recall some notation from 
\cite{KKL19}.  Since, as noticed in \cite{KKL19}, the operator $(-\Delta_p)^s$ may not be classically defined for all $C^2$ functions, we must restrict our consideration to a suitable subclass of test functions when defining viscosity solutions.
Given an open set $D$, we denote by $C^2_\eta(D)$, a subset of $C^2(D)$, defined as
\begin{equation}\label{Ceta}
C^2_\eta(D)=\left\{\phi\in C^2(D)\; :\; \sup_{x\in D\setminus N_\phi}\left[\frac{\min\{d_\phi(x), 1\}^{\eta-1}}{|\nabla\phi(x)|} +
\frac{|D^2\phi(x)|}{(d_\phi(x))^{\eta-2}}\right]<\infty\right\},
\end{equation}
where
$$ 
d_\phi(x)={\dist}(x, N_\phi)\quad \text{and}\quad N_\phi=\{x\in D\; :\; \nabla\phi(x)=0\}.$$

The above restricted class of test functions becomes necessary to define $(-\Delta_p)^s$ in the classical sense in the singular case, that is, for
$p\leq \frac{2}{2-s}$.  We also denote by
$$L^{p-1}_{sp}(\Rn)=\{w\in L^{p-1}_{\rm loc}(\Rn)\; :\; 
\int_{\Rn}\frac{|w(z)|^{p-1}}{1+|z|^{N+sp}}\dz<\infty\}. $$
Now we are ready to define the viscosity solution from \cite[Definition~3]{KKL19}. 

\begin{defi}\label{Def2.1}
Let $\Omega$ be a nonempty open set.
A function $u:\Rn\to \R$ is a viscosity supersolution to 
$(-\Delta_p)^su = f(x, u, \na u) $ in $\Omega$,
where $f$ is a continuous function, if it satisfies the following
\begin{itemize}
\item[(i)] $u$ is lower semicontinuous in $\bar\Omega$.
\item[(ii)] If $\varphi\in C^2(B_r(x_0))$ for some $B_r(x_0)\subset \Omega$ satisfies $\varphi(x_0)=u(x_0)$,
$\varphi\leq u$ in $B_r(x_0)$ and one of the following holds
\begin{itemize}
\item[(a)]  $p>\frac{2}{2-s}$ or $\nabla\varphi(x_0)\neq 0$,

\item[(b)] $p\leq \frac{2}{2-s}$ and $\nabla\varphi(x_0)= 0$ is such that $x_0$ is an isolated critical point of $\varphi$, and
$\varphi\in C^2_\eta(B_r(x_0))$ for some $\eta>\frac{sp}{p-1}$,
\end{itemize}
then we have $(-\Delta_p)^s \varphi_r(x_0)\geq f(x_0,  \varphi(x_0), \na \varphi(x_0))$, where
\[
\varphi_r(x)=\left\{\begin{array}{ll}
\varphi(x) & \text{for}\; x\in B_r(x_0),
\\[2mm]
u(x) & \text{otherwise}.
\end{array}
\right.
\]

\item[(iii)] We have $u_-\in L^{p-1}_{sp}(\Rn)$.
\end{itemize}

We say $u$ is a viscosity subsolution in $\Omega$, if $-u$ is a viscosity supersolution in $\Omega$. Furthermore, a 
viscosity solution of $(-\Delta_p)^s u= f$ in $\Omega$ is both sub and super solution in $\Omega$. 
\end{defi}

\begin{rem}
In this article, we employ the concept of viscosity solutions, as they require minimal regularity assumptions on the solution. However, if we assume that $u$ is of class $C^1$, it can be readily shown that any $C^1$ weak solution is also a viscosity solution. Consequently, our Theorem~\ref{gradnon} remains applicable in this setting.

We note that for similar problems involving elliptic operators with gradient nonlinearities, it is standard practice to impose $C^1$ regularity; see \cite{BBF25,BGV19,CHZ22}. In contrast, establishing such regularity for the fractional $p$-Laplacian remains a challenging open problem. For the current state of this problem, we refer the reader to \cite{BS25,BT25,BLS,GL24} and the references therein. Notably, the recent work \cite{GJS25} establishes $C^{1,\alpha}$ estimates for fractional $p$-harmonic functions, but only for $p\in [2,\frac{2}{1-s})$.
\end{rem}

We start with a comparison principle, suitable for our purpose. 
\begin{lemma}\label{comparison}
Let $\Omega$ be a bounded domain of $\Rn$. Let 
$u \in C(\bar\Omega)\cap L^{p-1}_{sp}(\Rn)$ be a viscosity supersolution to    
$$(-\Delta_p)^s u = f(x, \Grad u)\quad  \text{ in }\; \Omega,$$ and $v \in C(\bar\Omega)\cap L^{p-1}_{sp}(\Rn)$ be solution to    
$$(-\Delta_p)^s v < f(x, \Grad v)\quad  \text{ in }\; \Omega.$$ 
In addition, also assume that one of $u, v$ is in $C^2(\Omega)$
and its gradient does not vanish in $\Omega$.
If $u \geq v$ in $\Omega^c$, then $u \geq v$ in $\Rn$.
\end{lemma}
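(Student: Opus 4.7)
My approach would be by contradiction. Assume $u(z_0)<v(z_0)$ at some $z_0\in\Omega$. Since $u-v$ is continuous on $\bar\Omega$ and nonnegative on $\partial\Omega$, the quantity $-M:=\min_{\bar\Omega}(u-v)$ is strictly negative and attained at an interior point $x_0\in\Omega$; using $u\geq v$ on $\Omega^c$, one in fact has $u\geq v-M$ on all of $\Rn$. The decisive feature is that both $(-\Delta_p)^s$ and the nonlinearity $f(x,\na\cdot)$ are invariant under adding a constant to the unknown, so I can vertically shift one function to make the two touch at $x_0$ and then play the strict inequality for $v$ against the super/subsolution machinery of Definition~\ref{Def2.1}.

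Suppose first that $v\in C^2(\Omega)$ with $\na v\neq 0$ in $\Omega$. Put $\tilde v:=v-M$; then $\tilde v\leq u$ on $\Rn$ with equality at $x_0$, and $\tilde v$ is a $C^2$ function on any ball $B_r(x_0)\subset\Omega$ with $\na\tilde v(x_0)\neq 0$, so branch (ii)(a) of Definition~\ref{Def2.1} applies, giving $(-\Delta_p)^s\tilde v_r(x_0)\geq f(x_0,\na v(x_0))$. Here $\tilde v_r$ coincides with $\tilde v$ on $B_r(x_0)$ and with $u$ outside, so $\tilde v_r\geq\tilde v$ pointwise with equality at $x_0$. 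Monotonicity of $t\mapsto|t|^{p-2}t$ applied to the principal-value integrand then yields $(-\Delta_p)^s\tilde v_r(x_0)\leq(-\Delta_p)^s v(x_0)$, which combined with the strict subsolution inequality for $v$ gives $f(x_0,\na v(x_0))\leq(-\Delta_p)^s v(x_0)<f(x_0,\na v(x_0))$, a contradiction. The other case, $u\in C^2(\Omega)$ with $\na u\neq 0$, is entirely symmetric: set $\tilde u:=u+M$ so that $\tilde u\geq v$ globally with equality at $x_0$; then $u$ being a classical supersolution gives $(-\Delta_p)^s u(x_0)\geq f(x_0,\na u(x_0))$, while testing $\tilde u$ from above against the strict viscosity subsolution $v$ produces $(-\Delta_p)^s\tilde u_r(x_0)<f(x_0,\na u(x_0))$, and the reverse monotonicity estimate $(-\Delta_p)^s\tilde u_r(x_0)\geq(-\Delta_p)^s u(x_0)$ closes the contradiction.

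The one point requiring care is to ensure that the constructed test function lies in the regular branch (ii)(a) of Definition~\ref{Def2.1}, thereby sidestepping the singular case with its $C^2_\eta$ requirement; this is precisely what the non-vanishing gradient hypothesis delivers. Once this observation is in place, the remainder is a routine monotonicity estimate for the nonlocal integrand, so I expect no further technical obstacle.
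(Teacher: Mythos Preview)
Your argument is essentially the paper's own proof: the paper also argues by contradiction, sets $\rho_0:=\max_{\Rn}(v-u)$ (your $M$), takes $\varphi:=v-\rho_0$ (your $\tilde v$), uses it as a test function for the supersolution $u$ at the touching point via branch (ii)(a), and then applies the monotonicity of $t\mapsto|t|^{p-2}t$ to compare $(-\Delta_p)^s\varphi_r(x_0)$ with $(-\Delta_p)^s v(x_0)$ and reach a contradiction. The only difference is cosmetic: the paper simply writes ``without loss of generality $v\in C^2$'' for the symmetric case, whereas you spell it out explicitly.
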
 
\begin{proof}
Without any loss of generality, we assume that $v\in C^2(\Omega)$
and $|\na v|\neq 0$ in $\Omega$.
Suppose, on the contrary, that $u\ngeq v$ in $\Rn$. Then, by our
hypothesis, there exists $x_0\in\Omega$ so that
$$\rho_0:=v(x_0)-u(x_0)=\max_{\Rn} (v-u)>0.$$
Define,
$\varphi(x) := v(x) - \rho_0$. Therefore, $\varphi(x)\leq u(x)$, $\varphi(x_0)=u(x_0)$ and $\na\varphi (x_0)=\na v(x_0) \neq 0$. Thus, applying the definition of viscosity supersolution
we obtain $(-\Delta_p)^s\varphi_r(x_0)\geq f(x_0, \na v(x_0))$, where $\varphi_r$ is given by Definition ~\ref{Def2.1} and
$B_r(x_0)\Subset\Omega$. Since
$\varphi_r(x)-\varphi_r(x_0)\geq v(x)-v(x_0)$ for $x\in\Rn$, we obtain using monotonicity of the map $t\mapsto |t|^{p-2}t$ that
$$f(x_0, \na v(x_0))\leq (-\Delta_p)^s\varphi_r(x_0)
\leq (-\Delta_p)^s v(x_0)< f(x_0, \na v(x_0)),$$
which is a contradiction. Hence, we must have $u\geq v$ in $\Rn$,
completing the proof.
\end{proof}
Next result is the strong maximum principle in viscosity solution setting. For an analogous result for weak supersolutions
we refer to \cite{DPQ17}.

\begin{lemma}\label{maxpr}
Let $\Omega$ be an open set in $\Rn$.
Let u be a nontrivial nonnegative solution to $(-\Delta_p)^s u \geq 0$ in  $\Omega$. Then $u>0$ in  $\Omega$.
\end{lemma}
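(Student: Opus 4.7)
The plan is to argue by contradiction: suppose $u(x_0)=0$ for some $x_0 \in \Omega$, and deduce that $u \equiv 0$ on $\Rn$, contradicting nontriviality. Since $u\geq 0$ attains a global minimum at $x_0$, any smooth test function $\varphi$ touching $u$ from below at $x_0$ must satisfy $\nabla \varphi(x_0)=0$. Hence for $p\leq\frac{2}{2-s}$ we are necessarily in case (ii)(b) of Definition~\ref{Def2.1} and must produce $\varphi\in C^2_\eta(B_r(x_0))$ with $\eta>\frac{sp}{p-1}$ and an isolated critical point at $x_0$. A natural candidate that also works in the non-singular regime (ii)(a) is
$$
\varphi(x) := -|x-x_0|^{\beta},\qquad \beta > \max\!\left(2,\tfrac{sp}{p-1}\right).
$$
Then $\varphi \leq 0 \leq u$, $\varphi(x_0)=u(x_0)$, the point $x_0$ is the unique (hence isolated) critical point of $\varphi$, and a direct inspection of the weights in \eqref{Ceta} shows $\varphi \in C^2_\eta(B_r(x_0))$ with $\eta=\beta$.

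Applying the supersolution property yields $(-\Delta_p)^s \varphi_r(x_0)\geq 0$, where $\varphi_r=\varphi$ on $B_r(x_0)$ and $\varphi_r=u$ outside. Since $\varphi_r(x_0)=0$, $\varphi_r\leq 0$ on $B_r(x_0)$, and $\varphi_r=u\geq 0$ on $\Rn\setminus B_r(x_0)$, splitting the singular integral defining $(-\Delta_p)^s \varphi_r(x_0)$ at $\partial B_r(x_0)$ gives
$$
\int_{\Rn\setminus B_r(x_0)}\frac{u(y)^{p-1}}{|x_0-y|^{N+sp}}\,dy \;\leq\; \int_{B_r(x_0)} |x_0-y|^{\beta(p-1)-N-sp}\,dy \;=\; C\,r^{\beta(p-1)-sp},
$$
the last integral being finite exactly because $\beta(p-1)>sp$. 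For fixed $\rho>0$ with $B_\rho(x_0)\Subset\Omega$, restricting the left-hand integral to $\Rn\setminus B_\rho(x_0)$ (which is a smaller set, preserving the inequality) and letting $r\to 0^+$ forces the right-hand side to $0$, whence $u\equiv 0$ almost everywhere on $\Rn\setminus B_\rho(x_0)$.

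Since $\rho$ can be taken arbitrarily small, $u=0$ a.e.\ on $\Rn\setminus\{x_0\}$, and the lower semicontinuity of $u$ together with $u\geq 0$ promotes this to $u\equiv 0$ on $\Rn$, the desired contradiction. The main technical point is the membership $\varphi\in C^2_\eta(B_r(x_0))$: one verifies that $|x-x_0|^{\eta-1}/|\nabla\varphi(x)|$ and $|D^2\varphi(x)|/|x-x_0|^{\eta-2}$ are both bounded as $x\to x_0$, which pins down $\eta=\beta$ and explains why the same condition $\beta>\frac{sp}{p-1}$ serves simultaneously to make $\varphi$ admissible in the singular regime and to ensure that the inner-ball contribution vanishes as $r\to 0^+$.
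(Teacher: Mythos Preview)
Your proof is correct and follows essentially the same approach as the paper: both use the test function $\varphi(x)=-|x-x_0|^\beta$ with $\beta>\max\{2,\frac{sp}{p-1}\}$ and exploit that the inner-ball contribution $\int_{B_r(x_0)}|x_0-y|^{\beta(p-1)-N-sp}\,dy=Cr^{\beta(p-1)-sp}\to 0$. The only cosmetic difference is that the paper picks a single small $r$ for which the strictly negative outer term $-\int_{B_r^c}u(y)^{p-1}|x_0-y|^{-N-sp}\,dy$ dominates (yielding $(-\Delta_p)^s\varphi_r(x_0)<0$ directly), whereas you send $r\to 0$ to force $u\equiv 0$; the underlying mechanism is identical.
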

\begin{proof}
Suppose, on the contrary, that $u(x_0)=0$ for some $x_0 \in \Omega$. Let $\varphi(x) = -|x-x_0|^\eta$ for some large 
$\eta>\max\{\frac{sp}{p-1}, 2\}$, so that $\varphi \in C^2_\eta(B_{r}(x_0))$ (see \eqref{Ceta}).  Then, from \cite[Lemma~3.6 and 3.7]{KKL19}, we have
$$
\lim_{r\to 0}\left|{\rm PV}\int_{B_r(x_0)} J_p(\varphi(x_0)-\varphi(y)) \frac{\dy}{|x_0-y|^{N+sp}}\right|=0,
$$
where $J_p(t)=|t|^{p-2}t$. Since $u$ is nontrivial and nonnegative,
we can choose $r$ small enough so that
\begin{equation}\label{EL2.2A}
(-\Delta_p)^s\varphi_r(x_0)={\rm PV}\int_{B_r(x_0)} J_p(\varphi(x_0)-\varphi(y)) \frac{\dy}{|x_0-y|^{N+sp}} + \int_{B^c_r(x_0)} J_p(-u(y)) \frac{\dy}{|x_0-y|^{N+sp}}<0.
\end{equation}
We note that $\varphi(x_0) =u (x_0) =0$, $\varphi(x) \leq u(x)$ in  $B_{r}(x_0) $. Thus,
by Definition~\ref{Def2.1}, we have $(-\Delta_p)^s\varphi_r(x_0)\geq 0$, which contradicts \eqref{EL2.2A}.
Hence we have the proof.
\end{proof}

Let us now recall the following result from \cite[Theorem~1.1]{DPQ25}.
\begin{thm}\label{DPQ1}
Let $N \geq 2$, $s\in (0, 1)$ and $p > 1$.  Also, let $sp \neq N$. Then $\phi_{\theta} (x) = |x|^{-\theta}$, where $\theta \in \left( -\frac{sp}{p-1}, \frac{N}{p-1} \right)$, is a viscosity solution to
$$
(-\Delta_p)^{s} \phi_{\theta} (x) = C(\theta)\, |x|^{-\theta(p-1) -sp} \quad \text{in}\;\;  \Rn \setminus \{0\},$$
where $C(\theta)$ is a constant satisfying
\[C(\theta)
\begin{cases}
=0 \quad\text{ if } \theta = 0 \text { or } \theta = \frac{N-sp}{p-1},
\\
> 0 \quad \text{ if } \min \left\{-\frac{N-sp}{p-1}, 0 \right\} < -\theta < \max \left\{ -\frac{N-sp}{p-1}, 0\right\},
\\
< 0 \quad \text{ otherwise. }
\end{cases}
\]
\end{thm}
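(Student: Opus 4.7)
My plan is to establish Theorem~\ref{DPQ1} in three stages: a reduction using the homogeneity of $(-\Delta_p)^s$, a verification that the defining integral makes sense (in the viscosity sense where necessary), and a sign analysis exploiting the two explicit zeros of $C(\theta)$.

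\textbf{Reduction via scaling.} Rotational invariance of $(-\Delta_p)^s$, together with the scaling
\[
(-\Delta_p)^s\bigl[u(\lambda\,\cdot)\bigr](x)=\lambda^{sp}\,(-\Delta_p)^s u(\lambda x)
\]
applied to $\phi_\theta(\lambda x)=\lambda^{-\theta}\phi_\theta(x)$, forces $(-\Delta_p)^s\phi_\theta$ to be radial and homogeneous of degree $-\theta(p-1)-sp$. Hence $(-\Delta_p)^s\phi_\theta(x)=C(\theta)|x|^{-\theta(p-1)-sp}$ for the scalar
\[
C(\theta):=(-\Delta_p)^s\phi_\theta(e_1)={\rm PV}\int_{\Rn}|1-|y|^{-\theta}|^{p-2}(1-|y|^{-\theta})\,\frac{\dy}{|e_1-y|^{N+sp}},
\]
provided this integral exists in the appropriate sense, reducing the whole problem to a single scalar computation.

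\textbf{Integrability and viscosity meaning.} I would split the integral into the regions near $0$, near infinity, and near $e_1$, and check: near $y=0$ the integrand is of order $|y|^{-\theta(p-1)}$, integrable iff $\theta<N/(p-1)$; near infinity it is of order $|y|^{-\theta(p-1)-N-sp}$, integrable iff $\theta>-sp/(p-1)$; and the singularity at $y=e_1$ is either absolutely integrable (when $sp\leq p-1$) or treated by odd cancellation via the linearization $1-|y|^{-\theta}\sim\theta\langle e_1,e_1-y\rangle$. For $\theta$ in the singular regime of Definition~\ref{Def2.1}, I would verify the viscosity requirement at critical points by producing admissible $C^2_\eta$ test functions and applying the regularization of \cite[Lemmas~3.6--3.7]{KKL19} already exploited in Lemma~\ref{maxpr}.

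\textbf{The two zeros and sign analysis.} Two zeros of $\theta\mapsto C(\theta)$ are immediate: $\theta=0$, since $\phi_0\equiv 1$, and $\theta_*=(N-sp)/(p-1)$, since $\phi_{\theta_*}$ is the fractional $p$-fundamental solution, i.e.\ $(-\Delta_p)^s\phi_{\theta_*}\equiv 0$ on $\Rn\setminus\{0\}$. To pin down the sign on each of the three subintervals cut out by these zeros, the key is the substitution $r\mapsto 1/r$ in the radial part of the integral: splitting $\{|y|<1\}$ and $\{|y|>1\}$ and pairing $r$ with $r^{-1}$ rewrites $C(\theta)$ as a single radial integral whose integrand changes sign as $\theta$ crosses $0$ or $\theta_*$, yielding the desired positivity/negativity statement.

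\textbf{Expected main obstacle.} The genuine difficulty is this last step. The nonlinearity $|t|^{p-2}t$ means $\theta\mapsto C(\theta)$ is not smoothly monotone across the two zeros, so a naive derivative test at $\theta=0$ or $\theta=\theta_*$ will not suffice; the sign has to be extracted from a careful monotonicity inequality on the symmetrized integrand, using both known zeros as anchors. I expect this symmetrization-plus-monotonicity step, rather than the scaling reduction or the integrability bookkeeping, to carry the real content of the proof.
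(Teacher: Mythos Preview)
The paper does not prove this theorem at all: it is quoted verbatim as \cite[Theorem~1.1]{DPQ25}, and the only justification the paper supplies is the one-line remark that the cited result is stated for weak solutions and carries over to viscosity solutions by the equivalence \cite[Theorem~1.3]{BM21}. So the paper's ``proof'' consists of a citation plus a transfer-of-framework observation, nothing more.

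Your proposal, by contrast, sketches an actual proof of the underlying result. The scaling/rotational reduction to a single scalar $C(\theta)$ and the integrability bookkeeping are standard and fine. Your identification of the two zeros $\theta=0$ and $\theta=(N-sp)/(p-1)$ is correct, and your instinct that the sign analysis is where the real work lies matches what happens in \cite{DPQ25}. However, be aware that knowing the two zeros does \emph{not} by itself determine the sign on the three subintervals unless you also know continuity and that there are no further zeros; your ``symmetrization via $r\mapsto 1/r$'' idea is the right mechanism, but the nonlinearity in $p$ makes this pairing delicate, and you have not indicated how the angular integral (the kernel $|e_1-y|^{-N-sp}$ is not radial) interacts with the radial substitution. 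For the purposes of this paper, though, none of that is needed: you should simply cite \cite[Theorem~1.1]{DPQ25} for the weak-solution statement and invoke \cite[Theorem~1.3]{BM21} to pass to viscosity solutions, as the authors do.
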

We remark that the result \cite[Theorem~1.1]{DPQ25} is stated for weak solutions, but due to the equivalence between continuous weak solution and viscosity solution \cite[Theorem~1.3]{BM21}, the above result holds. Furthermore, since $\na \phi_\theta\neq 0$ in
$\Rn\setminus \{0\}$ and $\phi_\theta$ is $C^2$ for $x\neq 0$,
we see that $\phi_\theta$ is also a classical solution.

\begin{rem}  Note that for our Theorem~\ref{gradnon}, we assume 
$N>sp\Rightarrow - \frac{N-sp}{p-1} <0$. Thus, for
$\theta\in \left(0, \frac{N-sp}{p-1} \right)$, we have $C(\theta) >0$.
\end{rem}


%
We also need the following result.
\begin{lemma}\label{DPQ3}
Let $N>sp$ and $\theta\in (0, \frac{N}{p-1})$. For 
$\varepsilon_0 \in (0,\frac{1}{2}]$, we define
$$\varphi_{\theta} (x) :=
\begin{cases}
|x|^{-\theta} \text{ if } |x| \geq \varepsilon_0,
\\[2mm]
\varepsilon_0^{-\theta} \text{\quad if } |x| < \varepsilon_0.
\end{cases}
$$
Then,  for some $\varepsilon_0=\varepsilon_0(\theta)\in (0, \frac{1}{2}]$, we have the following: 
\begin{itemize}
\item[(i)] For $\theta \in \left(0, \frac{N-sp}{p-1} \right]$,
we have
$\varphi_\theta$ a classical solution to 
$$
(-\Delta_p)^s \varphi_{\theta} (x) \leq C(\theta, \varepsilon_0)\, |x|^{-\theta(p-1) -sp}\quad  \text{ in }\;\; \Rn \setminus B_1,$$ and $C(\theta, \varepsilon_0) >0$.

\item[(ii)] For $\theta \in \left( \frac{N-sp}{p-1}, \frac{N}{p-1} \right)$, $\varphi_\theta$ is a classical solution to 
$$(-\Delta_p)^s \varphi_{\theta} (x) < 0 \quad \text{ in }\;\; \Rn \setminus B_1.$$
\end{itemize}
\end{lemma}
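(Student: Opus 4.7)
My plan is to compare $\varphi_\theta$ with the explicit radial function $\phi_\theta(x)=|x|^{-\theta}$ from Theorem~\ref{DPQ1}. Since $\varphi_\theta\equiv \phi_\theta$ on $\Rn\setminus B_{\varepsilon_0}$ and, for any $x$ with $|x|>1$, the kernel $|x-y|^{-N-sp}$ is non-singular over $B_{\varepsilon_0}$ (indeed $|x-y|\geq |x|/2$ there), I would write the two operators at $x$ as absolutely convergent integrals and subtract. This produces
\begin{equation*}
(-\Delta_p)^s\varphi_\theta(x)-(-\Delta_p)^s\phi_\theta(x)=\int_{B_{\varepsilon_0}}\Bigl[J_p\bigl(|x|^{-\theta}-\varepsilon_0^{-\theta}\bigr)-J_p\bigl(|x|^{-\theta}-|y|^{-\theta}\bigr)\Bigr]\frac{\dy}{|x-y|^{N+sp}},
\end{equation*}
where $J_p(t)=|t|^{p-2}t$. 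Since $|y|^{-\theta}\geq \varepsilon_0^{-\theta}$ on $B_{\varepsilon_0}$ and $J_p$ is increasing, the bracket is non-negative; writing it explicitly as $(|y|^{-\theta}-|x|^{-\theta})^{p-1}-(\varepsilon_0^{-\theta}-|x|^{-\theta})^{p-1}$ and discarding the second (non-positive contribution to the right side) gives the crude upper bound $|y|^{-\theta(p-1)}$.

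Next, I would use $|x-y|\geq |x|/2$ in the denominator and integrate $|y|^{-\theta(p-1)}$ over $B_{\varepsilon_0}$; the latter integral is finite precisely because $\theta<N/(p-1)$, and equals a constant multiple of $\varepsilon_0^{N-\theta(p-1)}$. Combined with the identity from Theorem~\ref{DPQ1}, this yields
\begin{equation*}
(-\Delta_p)^s\varphi_\theta(x)\;\leq\; C(\theta)\,|x|^{-\theta(p-1)-sp}\;+\;C\,\varepsilon_0^{N-\theta(p-1)}\,|x|^{-N-sp}.
\end{equation*}
Because $N-\theta(p-1)>0$ and $|x|\geq 1$, the factor $|x|^{\theta(p-1)-N}$ is at most one, so the right-hand side is bounded above by $\bigl(C(\theta)+C\,\varepsilon_0^{N-\theta(p-1)}\bigr)\,|x|^{-\theta(p-1)-sp}$.

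The two cases now follow. For part~(i), $\theta\in(0,(N-sp)/(p-1)]$ gives $C(\theta)\geq 0$ by Theorem~\ref{DPQ1}, and the correction $C\,\varepsilon_0^{N-\theta(p-1)}$ is strictly positive, so the borderline case $C(\theta)=0$ is absorbed and I recover the stated inequality with $C(\theta,\varepsilon_0)>0$ for any admissible $\varepsilon_0$. For part~(ii), $\theta\in((N-sp)/(p-1),N/(p-1))$ forces $C(\theta)<0$; I then shrink $\varepsilon_0$ until $C\,\varepsilon_0^{N-\theta(p-1)}<|C(\theta)|$, which makes the full coefficient strictly negative. The main technical subtlety is the sign and size bookkeeping for the difference of $J_p$-values (delicate when $p<2$ since $J_p$ is merely H\"older); once that is handled and the reduction to an integral over $B_{\varepsilon_0}$ is in place, the argument is a straightforward perturbation of Theorem~\ref{DPQ1}.
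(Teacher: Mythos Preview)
Your proposal is correct and follows essentially the same approach as the paper's proof: both write $(-\Delta_p)^s\varphi_\theta(x)$ as $(-\Delta_p)^s\phi_\theta(x)$ plus a correction integral over $B_{\varepsilon_0}$, drop the negative piece $-(\varepsilon_0^{-\theta}-|x|^{-\theta})^{p-1}$, bound the remaining integrand by $|y|^{-\theta(p-1)}$, and then split into the two cases according to the sign of $C(\theta)$. The only cosmetic difference is that the paper rescales $z=y/|x|$ to extract the factor $|x|^{-\theta(p-1)-sp}$ directly and then bounds $|x/|x|-z|\geq 1-\varepsilon_0$, whereas you bound $|x-y|\geq |x|/2$ to get $|x|^{-N-sp}$ and then absorb the extra $|x|^{\theta(p-1)-N}\leq 1$; the resulting constants and conclusions are the same. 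Your remark about $J_p$ being merely H\"older for $p<2$ is not actually needed here, since the argument uses only the monotonicity of $J_p$ and the elementary inequality $(|y|^{-\theta}-|x|^{-\theta})^{p-1}\leq |y|^{-\theta(p-1)}$.
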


\begin{proof}
Let $x\in \Rn \setminus B_1$ and we compute using Theorem~\ref{DPQ1}
\begin{align*}
(-\Delta_p)^s \varphi_\theta(x)
&= (-\Delta_p)^s |x|^{-\theta} + \int_{B_{\varepsilon_0}} | \varphi_{\theta}(x) - \varphi_{\theta}(y)|^{p-2} (\varphi_{\theta}(x) -\varphi_{\theta}(y)) \frac{\dy}{|x-y|^{N+sp}} 
\\
&\qquad- \int_{B_{\varepsilon_0}} \abs{|x|^{-\theta} -|y|^{-\theta} }^{p-2} (|x|^{-\theta}- |y|^{-\theta}) \frac{\dy}{|x-y|^{N+sp}}
\\
&= C(\theta)\, |x|^{-\theta(p-1) -sp} -  \int_{B_{\varepsilon_0}} | \eps^{-\theta} - |x|^{-\theta}|^{p-1}  \frac{\dy}{|x-y|^{N+sp}} 
\\
&\qquad+ \int_{B_{\varepsilon_0}} \abs{|y|^{-\theta} -|x|^{-\theta} }^{p-1}  \frac{\dy}{|x-y|^{N+sp}}
\\
&\leq C(\theta)\, |x|^{-\theta(p-1) -sp} + \int_{B_{\varepsilon_0}} \abs{|y|^{-\theta} -|x|^{-\theta} }^{p-1}  \frac{\dy}{|x-y|^{N+sp}}
\\
& =C(\theta)\, |x|^{-\theta(p-1) -sp} + |x|^{-\theta(p-1)} \int_{B_{\varepsilon_0}} \abs{|{y}/{|x|}|^{-\theta} -1 }^{p-1}  \frac{\dy}{
|x|^{N +sp}\abs{\frac{x}{|x|}-\frac{y}{|x|}}^{N+sp}}
\\
& =C(\theta)\, |x|^{-\theta(p-1) -sp} + |x|^{-\theta(p-1)-sp} \int_{B_{\frac{\varepsilon_0}{|x|}}} \abs{|z|^{-\theta} -1 }^{p-1}  \frac{\dz}{
\abs{x/|x|-z}^{N+sp}}
\\
& \leq C(\theta)\, |x|^{-\theta(p-1) -sp} + |x|^{-\theta(p-1)-sp} \int_{B_{\frac{\varepsilon_0}{|x|}}} |z|^{-\theta(p-1)}  \frac{\dz}{
\abs{1-\frac{\varepsilon_0}{|x|}}^{N+sp}}
\\
& =C(\theta)\, |x|^{-\theta(p-1) -sp} + \frac{|x|^{-\theta(p-1)-sp}}{{
\abs{1-\frac{\varepsilon_0}{|x|}}^{N+sp}}} \int_{B_{\frac{\varepsilon_0}{|x|}}} |z|^{-\theta(p-1)}  \dz
\\
& \leq C(\theta)\, |x|^{-\theta(p-1) -sp} + \frac{|x|^{-\theta(p-1)-sp}}{{
\abs{1-\varepsilon_0}^{N+sp}}} \int_{B_{\varepsilon_0}} |z|^{-\theta(p-1)}  \dz.
\end{align*}
Note that for $\theta(p-1)< N$, we have $z\mapsto |z|^{-\theta(p-1)}$ integrable around $0$.
For $\theta \in \left(0, \frac{N-sp}{p-1} \right]$, we have $C(\theta) \geq 0$ and (i) follows by letting $\varepsilon_0=\frac{1}{2}$.

For $\theta \in \left( \frac{N-sp}{p-1}, \frac{N}{p-1} \right)$,
we have $C(\theta) <0$ by Theorem~\ref{DPQ1}, and therefore,  we 
can choose $\varepsilon_0$ small enough, depending on $C(\theta)$,
so that (ii) holds.

\end{proof}


\section{Proof of Theorem~\ref{gradnon}}\label{S-main}
In this section, we prove our main result. Our main idea relies on the improvement of the lower bound of the solution of \eqref{E1.1A}. The first lower bound holds true for any positive supersolution. Similar result can also be found in \cite{DPQ25}.
We say $\Omega$ is an exterior domain if for some $R$ we have
$\{|x|\geq R\}\subset \Omega$.

\begin{lemma}\label{L1.2}
Let $\Omega$ be an exterior domain, $u:\Rn\to (0, \infty)$ be lower semicontinuous and solve 
$$
(-\Delta_p)^s u(x) \geq 0\quad  \text{ in }\;\; \Omega.$$
Then, for $N>ps$ and any positive $\theta\in (\frac{N-sp}{p-1},\frac{N}{p-1})$, there exists a constant $\kappa > 0$ such that 
$$u(x) \geq \kappa \min\{1, |x|^{-\theta}\}\quad  \text{ in }\; x\in\Rn.$$
\end{lemma}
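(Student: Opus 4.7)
The plan is to produce a compactly supported strict classical subsolution of $(-\Delta_p)^s w \le 0$ built from the barrier $\varphi_\theta$ of Lemma~\ref{DPQ3}, apply the comparison principle (Lemma~\ref{comparison}) on an annulus, and then send the outer radius to infinity. For $\theta$ in the stated range, Lemma~\ref{DPQ3}(ii) gives $(-\Delta_p)^s\varphi_\theta<0$ on $\Rn\setminus B_1$, which is the building block.

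First, I would fix $R>\max\{1,\varepsilon_0^{-1}\}$ large enough that $\{|x|\ge R\}\subset\Omega$ (possible since $\Omega$ is exterior). Since $u$ is lower semicontinuous and strictly positive on the compact set $\bar B_R$, the constant $m_0:=\inf_{\bar B_R}u$ is strictly positive. I would then choose $\kappa>0$ small enough so that $\kappa\varepsilon_0^{-\theta}\le m_0$; this guarantees $\kappa\varphi_\theta\le m_0\le u$ on $\bar B_R$ since $\varphi_\theta\le\varepsilon_0^{-\theta}$ everywhere.

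For each $R'>R$ define the truncated barrier
$$
v_{R'}(x)=\kappa\bigl(\varphi_\theta(x)-(R')^{-\theta}\bigr)_+,
$$
which is continuous on $\Rn$, vanishes on $B_{R'}^c$, and is $C^2$ with non-vanishing gradient on the annulus $\Omega':=B_{R'}\setminus\bar B_R$. The key technical step is to verify that $(-\Delta_p)^s v_{R'}(x)<0$ for every $x\in\Omega'$. Splitting the integral at $|y|=R'$: for $|y|\le R'$ one has $v_{R'}(x)-v_{R'}(y)=\kappa(\varphi_\theta(x)-\varphi_\theta(y))$, while for $|y|>R'$ one has $v_{R'}(x)-v_{R'}(y)=\kappa(\varphi_\theta(x)-(R')^{-\theta})$ which is strictly positive but strictly smaller than $\kappa(\varphi_\theta(x)-\varphi_\theta(y))$ because $|y|^{-\theta}<(R')^{-\theta}$. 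Monotonicity of $t\mapsto|t|^{p-2}t$ gives the integrand of $(-\Delta_p)^s v_{R'}$ at most $\kappa^{p-1}$ times that of $(-\Delta_p)^s\varphi_\theta$, with strict inequality on $\{|y|>R'\}$. Integrating and using Lemma~\ref{DPQ3}(ii) yields $(-\Delta_p)^s v_{R'}(x)<\kappa^{p-1}(-\Delta_p)^s\varphi_\theta(x)<0$.

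With this strict subsolution in hand, applying Lemma~\ref{comparison} on $\Omega'$ (the required inequality $u\ge v_{R'}$ on $(\Omega')^c$ holds: on $\bar B_R$ by the choice of $\kappa$ and on $B_{R'}^c$ because $v_{R'}\equiv 0<u$) gives $u(x)\ge\kappa(|x|^{-\theta}-(R')^{-\theta})$ on $\Omega'$. Sending $R'\to\infty$ delivers $u(x)\ge\kappa|x|^{-\theta}$ for all $|x|\ge R$. Combining with $u\ge m_0\ge\kappa$ on $\bar B_R$ and possibly shrinking $\kappa$ produces the claimed bound $u(x)\ge\kappa\min\{1,|x|^{-\theta}\}$ on $\Rn$. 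The main obstacle is the design of the truncation: the untruncated choice $\kappa\varphi_\theta$ does not permit the comparison ordering at infinity since $u$ may decay, while an arbitrary truncation typically destroys the subsolution property for a nonlocal operator; the subtraction of $(R')^{-\theta}$ is precisely what preserves sign-preserving monotonicity of the far-field increments and keeps the strict subsolution property intact.
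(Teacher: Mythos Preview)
Your proof is correct and follows the same overall strategy as the paper: build a subsolution from the barrier $\varphi_\theta$ of Lemma~\ref{DPQ3}(ii), compare with $u$ on an annulus via Lemma~\ref{comparison}, and pass to a limit. The only difference lies in how the barrier is perturbed to force the ordering at infinity. You take the truncation $v_{R'}=\kappa(\varphi_\theta-(R')^{-\theta})_+$ and then verify, via the monotonicity of $t\mapsto|t|^{p-2}t$, that the truncation preserves the strict subsolution property. The paper instead simply sets $v_\varepsilon=\kappa\varphi_\theta-\varepsilon$ with no positive part: since $(-\Delta_p)^s$ depends only on the increments $v(x)-v(y)$, subtracting a constant leaves the operator unchanged, so $(-\Delta_p)^s v_\varepsilon=\kappa^{p-1}(-\Delta_p)^s\varphi_\theta<0$ is immediate; one then chooses $R_\varepsilon$ with $v_\varepsilon\le0$ on $B_{R_\varepsilon}^c$, applies comparison on $B_{R_\varepsilon}\setminus B_R$, and sends $\varepsilon\to0$. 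So what you flag as ``the main obstacle''---designing a truncation that survives the nonlocal operator---is in fact avoidable: the plainest perturbation already does the job without any splitting or monotonicity argument.
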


\begin{proof}
From Lemma~\ref{DPQ3} we know $(-\Delta_p)^s \varphi_{\theta} (x) < 0$ in $B^c_1$. Choose $R>1$ large enough so that
$\{|x|\geq R\}\subset \Omega$.
Define $\kappa = \varepsilon_0^{\theta} \min_{|x| \leq R} u$. 
From the lower semicontinuity of $u$ it follows that $\kappa>0$. For any $\varepsilon >0$, we define 
$$
v_{\varepsilon} (x) := \kappa \varphi_{\theta} (x) - \varepsilon.
$$
It is easy to see that $(-\Delta_p)^s v_{\varepsilon} (x) < 0$ in 
$B^c_R$. Since $\theta > 0$, there exists $R_{\varepsilon}>R$ large enough so that $v_{\varepsilon} \leq 0$ for $|x| \geq R_{\varepsilon}$. For$|x| \leq R$, $v_{\varepsilon}(x) < \varepsilon_0^{\theta} (\min_{|x| \leq R} u) \varphi_{\theta} (x) \leq \varepsilon_0^{\theta} (\min_{|x| \leq R} u) \varepsilon_0^{-\theta} = \min_{|x| \leq R} u \leq u(x)$. Therefore,
since 
$$(-\Delta_p)^s v_\varepsilon<0\quad \text{and}\quad
(-\Delta_p)^s u(x)\geq 0\quad \text{in}\;\; B_{R_{\varepsilon}} \setminus B_R,$$
we have from Lemma~\ref{comparison} that $v_\varepsilon \leq u$ in $\Rn$.  Letting $\varepsilon \rightarrow 0$, we obtain $u(x) \geq \kappa |x|^{-\theta}$ for $x\in\Rn$.
Hence the result.
\end{proof}

Next key lemma improves the lower bound of $u$.
\begin{lemma}\label{L1.3}
Suppose that $t, m \geq 0$, $N>sp$ and  $m$ satisfies \eqref{ineq:m}.
 In addition, let
\begin{equation}\label{L3.2A}
t (N-sp) + m(N-(sp-p+1)) < N(p-1).
\end{equation}
Let $\Omega$ be an exterior domain, $u:\Rn\to (0, \infty)$ be lower semicontinuous that solve
\begin{equation}\label{L3.2B}
(-\Delta_p)^su \geq u^{t} |\na u|^{m}\quad
\text{in}\;\; \Omega.
\end{equation}
Then, for any $\theta \in \left( 0, \frac{N}{p-1} \right)$, there exists a constant $C(\theta, \Omega, u)$ such that 
\begin{equation}\label{eq1.2}
u(x) \geq C |x|^{-\theta}\quad  \text{ in }\;\; \bar \Omega \cap \{|x| \geq 1 \}.
\end{equation}
\end{lemma}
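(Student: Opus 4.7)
The plan is to iteratively improve the lower bound on $u$ by a barrier comparison argument. The starting bound comes from Lemma~\ref{L1.2}, which (noting that $u^t|\nabla u|^m\ge 0$) yields $u(x)\ge\kappa_0|x|^{-\theta_0}$ for $|x|\ge 1$ for any $\theta_0\in\bigl(\tfrac{N-sp}{p-1},\tfrac{N}{p-1}\bigr)$. The goal is to drive $\theta_k$ down to any target $\theta\in(0,N/(p-1))$ via finitely many such steps.

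For the inductive step, assume $u(x)\ge c_k|x|^{-\theta_k}$ for $|x|\ge 1$. Then $u^t\ge c_k^t|x|^{-t\theta_k}$, so $u$ is a viscosity supersolution of the frozen equation $(-\Delta_p)^s v = c_k^t|x|^{-t\theta_k}|\nabla v|^m$ in $\{|x|\ge R\}\subset\Omega$. I use as subsolution barrier
\[
\psi_\varepsilon(x):=\mu\,\varphi_{\theta_{k+1}}(x)-\varepsilon,\qquad R_\varepsilon':=(\mu/\varepsilon)^{1/\theta_{k+1}},
\]
with $\varphi_{\theta_{k+1}}$ the truncated power from Lemma~\ref{DPQ3}, $\mu>0$ a small coefficient, and $\varepsilon>0$ a small offset chosen so that $\psi_\varepsilon\le 0$ on $\{|x|\ge R_\varepsilon'\}$. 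A direct computation using Lemma~\ref{DPQ3}(i) (or (ii) when $\theta_{k+1}>\tfrac{N-sp}{p-1}$) reduces strict subsolutionality on the annulus $B_{R_\varepsilon'}\setminus\bar B_R$ to the algebraic inequality
\[
\theta_{k+1}(p-1-m)\ge t\theta_k+m-sp,
\]
together with smallness of $\mu$. The boundary inequalities $\psi_\varepsilon\le u$ are ensured on $\bar B_R$ by taking $\mu\le u_0\,\varepsilon_0^{\theta_{k+1}}$, where $u_0:=\min_{\bar B_R}u>0$ is strictly positive by Lemma~\ref{maxpr}, and on $\{|x|\ge R_\varepsilon'\}$ by the definition of $R_\varepsilon'$. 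Lemma~\ref{comparison} then gives $u\ge\psi_\varepsilon$ in $\R^N$, and letting $\varepsilon\to 0$ yields $u(x)\ge\mu|x|^{-\theta_{k+1}}$ for $|x|\ge\varepsilon_0$, closing the induction.

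I then iterate with $\theta_{k+1}:=(t\theta_k+m-sp)/(p-1-m)$ while this quantity is positive; once it becomes nonpositive, $\theta_{k+1}$ can be chosen arbitrarily small (in particular equal to the target $\theta$). This affine map has fixed point $\theta^*:=(sp-m)/(t+m-p+1)$, and a direct rearrangement shows that \eqref{L3.2A} is equivalent to $\tfrac{N-sp}{p-1}<\theta^*$ in the supercritical regime $t+m>p-1$, while for $t+m\le p-1$ condition \eqref{L3.2A} is automatic. Hence the starting $\theta_0$ from Lemma~\ref{L1.2} can always be chosen in the interval where the iterates $\theta_k$ strictly decrease, and after finitely many steps $\theta_k<\theta$. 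The main obstacle is the uniform verification of the strict subsolution property on the growing annulus $B_{R_\varepsilon'}\setminus\bar B_R$: since $R_\varepsilon'\to\infty$ as $\varepsilon\to 0$, the exponent inequality $\theta_{k+1}(p-1-m)+sp-m-t\theta_k\ge 0$ is sharp, and when propagated from the initial $\theta_0$ it is precisely what forces \eqref{L3.2A}. A minor variant handles the degenerate case $m=p-1$ (allowed only when $sp>p-1$), in which $\mu^{p-1-m}=1$ collapses the amplitude analysis but the same algebraic condition emerges.
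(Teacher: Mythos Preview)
Your proposal is correct and follows essentially the same strategy as the paper: bootstrap the initial decay from Lemma~\ref{L1.2}, then iteratively improve the exponent by comparing $u$ with the truncated power barriers $\mu\varphi_{\theta_{k+1}}-\varepsilon$ from Lemma~\ref{DPQ3} via Lemma~\ref{comparison}, the key algebraic condition at each step being $\theta_{k+1}(p-1-m)+sp-m\ge t\theta_k$. The only cosmetic difference is that the paper inserts an explicit $\epsilon$--slack into the recursion $\sigma_{i+1}=\mathfrak g(\sigma_i)$ and splits the termination argument into three cases ($m=p-1$; $m<p-1$ with $t\tfrac{N-sp}{p-1}\le sp-m$; and the remaining case), whereas you run the sharp affine map $\theta_{k+1}=(t\theta_k+m-sp)/(p-1-m)$, absorb the needed strictness into the smallness of $\mu$ (legitimate since $m<p-1$ in that branch), and phrase termination via the fixed point $\theta^*=(sp-m)/(t+m-p+1)$ together with the equivalence of \eqref{L3.2A} and $\tfrac{N-sp}{p-1}<\theta^*$ in the regime $t+m>p-1$.
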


\begin{proof}
Using Lemma~\ref{L1.2} estimate \eqref{eq1.2} holds for any $\theta \in \left( \frac{N-sp}{p-1} , \frac{N}{p-1} \right)$,  hence it is enough to prove \eqref{eq1.2} only for $\theta \in \left( 0, \frac{N-sp}{p-1} \right]$. To this aim, we use an iteration process which we describe next.

Let $0<\sigma<\frac{N}{p-1}$  and suppose that
for some constant $C_{\sigma}$
we have
\begin{align}\label{L3.2D0}
u(x) &\geq C_{\sigma} |x|^{-\sigma}\quad \text{in}\; \bar\Omega\cap\{|x|\geq 1\}.
\end{align}
This clearly holds for $\sigma\in\left( \frac{N-sp}{p-1} , \frac{N}{p-1} \right)$, by Lemma~\ref{L1.2}. Using \eqref{L3.2D0}, we then have from \eqref{L3.2B} that
\begin{equation}\label{L5.1D}
(-\Delta_p)^s u \geq C_\sigma^t |x|^{-t\sigma}|\nabla u|^m \quad \text{in}\;\; \Omega\cap\{|x|\geq 1\}.
\end{equation}
Now, consider $\sigma_1$ such that $0<\sigma_1<\sigma$.
We claim that, for $\tilde\varphi_{\sigma_1}(x):= c \varphi_{\sigma_1}(x)$, where $c$ is a positive constant and $\varphi_{\sigma_1}$ is defined as in Lemma~\ref{DPQ3}, if we have
\begin{align}\label{L3.2D}
(-\Delta_p)^s \tilde\varphi_{\sigma_1} (x) &<  C_{\sigma}^t |x|^{-t \sigma} |\na \tilde\varphi_{\sigma_1}|^{m} \text{ for } |x| \geq r_{\sigma_1}
\end{align}
and some $r_{\sigma_1}>1$, then for some constant $C_{\sigma_1}$ we have
\begin{equation}\label{eqclaim2}
u(x) \geq C_{\sigma_1} |x|^{-\sigma_1} \text{ in } \bar \Omega \cap \{|x| \geq 1 \}.
\end{equation}

To prove this claim, since $u$ is positive in $\bar\Om$ and also lower semicontinuous, it is enough to prove \eqref{eqclaim2} for $|x|\geq r_{\sigma_1}$. We also set $r_{\sigma_1}$ large enough so that 
$\{|x|\geq r_{\sigma_1}\} \subset \Omega$.

Since $m\leq p-1$, from \eqref{L3.2D} we get that for $\kappa \in (0,1)$
$$
(-\Delta_p)^s (\kappa \tilde \varphi_{\sigma_1})(x) 
=\kappa^{p-1} (-\Delta_s)^p \tilde\varphi_{\sigma_1} (x) 
< \kappa^{m}  C_{\sigma}^t |x|^{-t \sigma} |\Grad \tilde\varphi_{\sigma_1}|^{m} 
=   C_{\sigma}^t |x|^{-t \sigma} |\Grad (\kappa \tilde\varphi_{\sigma_1})|^{m} $$
for  $|x| \geq r_{\sigma_1}$.
Choose $\kappa$ small enough so that $u(x) \geq \kappa {\tilde\varphi_{\sigma_1}}$ in $|x| \leq r_{\sigma_1}$. Let $\varepsilon \in (0,1)$ be small and define 
$$\psi(x) := \kappa{\tilde \varphi_{\sigma_1}} (x) -\varepsilon.$$
Therefore, 
\begin{equation*}
(-\Delta_p)^s \psi(x)  <   C_{\sigma}^t |x|^{{-t \sigma}} |\Grad \psi(x)|^{m}\quad  \text{ for } |x| \geq r_{\sigma_1}.
\end{equation*}
Thus $\psi$ is a strict subsolution of $(-\Delta_p)^s v=C_{\sigma}^t |x|^{{-t \sigma}} |\Grad v(x)|^{m}$ in $\{|x| \geq r_{\sigma_1}\}$ and by \eqref{L5.1D} $u$ is a supersolution of the same equation.  Now we can apply an argument similar to Lemma~\ref{L1.2} in conjunction with Lemma~\ref{comparison} to obtain
that $\psi\leq u$ in $\Rn$. Letting $\varepsilon\to 0$ we get
\eqref{eqclaim2}. Thus we have proved that for any $0<\sigma_1<\sigma$ if \eqref{L3.2D0} holds then \eqref{eqclaim2} also holds provided \eqref{L3.2D} is satisfied. We refer to this as {\it $\sigma_1$ is an improvement over $\sigma$.}

Next we show that given any $\theta\in (0, \frac{N-sp}{p-1}]$, we can find a finite number of pairs $(\sigma_{i+1}, \sigma_{i})$, 
$i=0,1, 2, \ldots, k-1$, such that $\sigma_0>\frac{N-sp}{p-1}$,
${ \sigma_{k}}\leq \theta$ and $\sigma_{i+1}$ is an improvement over $\sigma_{i}$. It is evident that this will prove \eqref{eq1.2}.
Let $\sigma_0=\frac{N-sp}{p-1} + \delta$ for some 
$\delta > 0$ small. By Lemma~\ref{L1.2} there exists
$C_{\sigma_0}$ satisfying
\begin{equation*}
u(x) \geq C_{\sigma_0} |x|^{- \sigma_0} \text{ in } \bar \Omega \cap \{ |x| \geq 1\}.
\end{equation*} 
Let $\sigma_1 = \frac{N-sp}{p-1}<\sigma_0$.
Also, recall the function $\varphi_\sigma$ from Lemma~\ref{DPQ3}.
Then, from Lemma~\ref{DPQ3} we have
\begin{align*}
(-\Delta_p)^s \varphi_{\sigma_1}(x) &\leq C |x|^{-\sigma_1(p-1)-sp},
\\
|x|^{-t \sigma_0}|\na \varphi_{\sigma_1}(x)|^{m} &=
\sigma^m_1\, |x|^{-t \sigma_1 -(\sigma_1+1)m -t \delta},
\end{align*}
for $|x|\geq 1$.
Using \eqref{L3.2A} we can choose $\delta >0$ small enough so that
$$N(p-1) > m\big(N-(sp-p+1)\big)  +t(N-sp) +\delta t(p-1)\Leftrightarrow
\sigma_1(p-1)+ sp > (\sigma_1+1) m + t \sigma_1 +t \delta.$$

Therefore, for some large enough $r_{\sigma_1}$ we have 
$$
(-\Delta_p)^s \varphi_{\sigma_1}(x) <  C_{\sigma_0}^t |x|^{-t \sigma_0}|\na \varphi_{\sigma_1}|^{m} \quad \text{ in }\;  |x| \geq r_{\sigma_1}.$$
Thus $\sigma_1$ is an improvement over $\sigma_0$ and therefore, \eqref{eq1.2} holds for $\theta=\sigma_1=\frac{N-sp}{p-1}$.

Now we complete the rest of the proof for $0<\theta < \sigma_1$ using an iteration method. 
Let $0<\sigma_{i+1}<\sigma_i\leq \sigma_1$.
From  Lemma~\ref{DPQ3} we have $(-\Delta_p)^s \varphi_{\sigma_{i}}(x) \leq C_{i} |x|^{-\sigma_{i}(p-1)-sp}$ for 
$\sigma_{i} \in \left(0, \frac{N-sp}{p-1} \right]$. Again, from the definition of $\varphi_{\sigma_i}$, we also have
$$|x|^{-t \sigma_i} |\na \varphi_{\sigma_{i+1}}|^m = \sigma^m_{i+1}|x|^{-t \sigma_i - (\sigma_{i+1}+1)m}\quad \text{for}\;\; |x|> 1.$$
Note that for  $|x|>1$ large enough
$$
C_{i+1}|x|^{-\sigma_{i+1}(p-1)-sp} < C^t_{\sigma_{i}} \sigma_{i+1}^m |x|^{-t \sigma_i - (\sigma_{i+1}+1)m}
$$
will hold, provided 
\begin{align}\label{eqclaim1}
   (\sigma_{i+1}+1)m+ t \sigma_i <\sigma_{i+1} (p-1)+sp=(\sigma_{i+1}+1)(p-1)+(sp-p+1),
\end{align}

for $i=1,2, 3,\ldots$, where $C_{\sigma_{i}}$ is the constant appearing in \eqref{L3.2D0} for $\sigma=\sigma_i$.
Therefore, it is enough to construct pair $(\sigma_{i+1}, \sigma_i)$ satisfying \eqref{eqclaim1} for $i=1,2, \ldots$.
It is also important to keep in mind that \eqref{L3.2D0} holds for $\sigma=\sigma_1$, as already shown above.
\medskip

\noindent {Case 1.} Suppose $m=p-1$.
Using \eqref{L3.2A} we have 
$$t (N-sp) < m(sp-p+1) \Rightarrow t \frac{N-sp}{p-1} < sp-p+1 \Rightarrow t \sigma< sp-p+1  $$
for any $\sigma \leq \frac{N-sp}{p-1}$, which implies \eqref{eqclaim1} as $m=p-1$. Thus, if we set $\sigma_2=\theta$, then
$\sigma_2$ is an improvement over $\sigma_1$.

\medskip

\noindent{Case 2.} Suppose that $m<p-1$ and $t \frac{N-sp}{p-1} \leq sp -m$. In this case we have for any 
$\sigma_2<\sigma_1\leq \frac{N-ps}{p-1}$
\begin{align*}
(\sigma_2+1)m + t \sigma_1 \, \leq \, (\sigma_2+1)m + t \frac{N-sp}{p-1}
&\leq (\sigma_1+1)m + sp-m
\\
&\leq \sigma_1 m + sp
\\
&< \sigma_1(p-1) +sp
\\
&=(\sigma_1 + 1)(p-1) +sp-p+1.
\end{align*}
Therefore, letting $\sigma_2=\theta$ we can complete the proof as before.
\medskip

\noindent{Case 3.} Suppose that $m<p-1$ and $t  \frac{N-sp}{p-1} > sp -m$. Using \eqref{ineq:m}, it follows that $m<sp$ which, in turn, implies $t>0$ in this case.
Choose $\epsilon \in (0, sp-m)$ small enough such that $t(N-sp) + m(N-sp+p-1) +\epsilon(p-1) < N(p-1)$. Such an $\epsilon$ exists due to \eqref{L3.2A}.
Define the function
$$\mathfrak{g} (\ell) =\frac{t \ell -sp+p-1+\epsilon}{p-1-m} - 1.$$
Note that $\mathfrak{g} (\ell) >0 $ whenever $t \ell + \epsilon > sp-m$. Define $\sigma_1 =\frac{N-sp}{p-1}$ and $\sigma_i = \mathfrak{g} (\sigma_{i-1})$ for $i=2,3, \ldots.$

\begin{align*}
\mathfrak{g} (\sigma_1) < \sigma_1 &\Leftrightarrow \frac{t \sigma_1 -sp+p-1+\epsilon}{p-1-m} < \frac{N-sp+p-1}{p-1}
\\
&\Leftrightarrow  t \sigma_1 -sp+p-1 + \epsilon<\frac{N-sp+p-1}{p-1} (p-1-m)
\\
&\Leftrightarrow t \frac{N-sp}{p-1} + \epsilon < N-sp +p-1 -\frac{m}{p-1}(N-sp+p-1) +sp -p+1
\\
&\Leftrightarrow t \frac{N-sp}{p-1} + \epsilon< N - \frac{m}{p-1}(N-sp+p-1)
\\
&\Leftrightarrow t(N-sp) +m(N-sp+p-1) + \epsilon(p-1) < N(p-1)
\end{align*}
and the last inequality is ensured by our choice of $\epsilon$. Therefore, $\sigma_2 < {\sigma_1}$. Again, using the linearity of $\mathfrak{g}$ we have 
$$\sigma_{i+1}-\sigma_i=\left(\frac{t}{p-1-m}\right)^{i-1}(\sigma_2 - \sigma_1)<0\quad \text{for}\;\; i=2, 3, \ldots.$$ 
Thus, $\{\sigma_i\}$ forms a strictly decreasing sequence. Also,
$$(\sigma_{i+1} +1)(p-1)+sp-p+1= t \sigma_i + m(\sigma_{i+1} +1) + \epsilon > t \sigma_i + m(\sigma_{i+1} +1).$$ 
Hence, \eqref{eqclaim1} holds for the pair $(\sigma_{i+1}, \sigma_i)$ with $\sigma_{i+1} = \mathfrak{g}(\sigma_i)$, which also
confirms $\sigma_{i+1}$ as an improvement over $\sigma_i$ for $i=1,2,\ldots$.

To complete the proof, we only need to show that there exists $n\geq 2$ such that $t\sigma_n\leq sp-m$ and \eqref{L3.2D0} holds for $\sigma=\sigma_n (<\sigma_1=\frac{N-sp}{p-1})$. Indeed, once we have such a  $\sigma_n$,
we can repeat the argument of Case 2 with the choice of $\sigma_{n+1}\leq \theta$.
We claim that there exists ${n}\geq 2$ such that $t { \sigma_n} <sp-m$ and then \eqref{eqclaim1} holds for the pair
{ $(\sigma_{n},\sigma_{n-1})$}.  If $\frac{t}{p-1-m}<1$ then $\mathfrak{g}$ is a contraction map with a negative fixed point 
$-\frac{sp-m-\epsilon}{p-1-m-t}$, hence there exists ${n}\geq 1$ such that $t \sigma_n \in (0, sp-m]$ and $t \sigma_{{ n}-1}>sp-m$. If $\frac{t}{p-1-m}\geq 1$, then $\sigma_n \rightarrow  -\infty$ as $n \rightarrow \infty$, and therefore, there exists ${n}\geq 1$ such that $t \sigma_{n} \in (0, sp-m]$ and $t \sigma_{n-1}>sp-m$. This proves our claim and completes the proof.
\end{proof}

%

Now we are ready to provide a proof of Theorem~\ref{gradnon}.
\begin{proof}[Proof of Theorem \ref{gradnon}]
In view of Lemma~\ref{maxpr}, we only need to consider the case
$u>0$ in $\Rn$. Otherwise, $u\equiv 0$ and the proof is done.
 Choose $\theta > 0$ small enough and 
$\bar \theta \in (0,\frac{sp}{p-1}\wedge 1)$ so that 
\begin{itemize}
\item[(i)] If $sp> p-1$, then $t \theta < sp-p+1$.

\item[(ii)] If $sp=p-1$,  then $ t \theta < (1-\bar \theta) (p-1-m)$. This is possible to do since $m<sp$ in this case by our 
assumption on $m$.

\item[(iii)] If $sp<p-1$, then choose $\bar \theta$ small enough 
(we actually need $\bar \theta <{\frac{sp-m}{p-1-m}}$ which is less than $\frac{sp}{p-1} $)
so that $\bar \theta(p-1-m) -(sp-m) + t \theta<0$.
\end{itemize}
In all the three cases, we observe that 
\begin{equation}\label{eqneg}
(\bar\theta-1)(p-1-m) +p-1-sp + t \theta  <0.
\end{equation}
For $\varepsilon \in (0,1)$, define $\varphi_{\varepsilon} (x) := -\varepsilon|x|^{\bar \theta}$.

We next  claim that for $\upkappa >0$ there exists $r_{\upkappa} > 0$ satisfying
\begin{equation}\label{claim}
(-\Delta_p)^s \varphi_{\varepsilon}(x) < \upkappa\, |x|^{-t \theta} |\Grad \varphi_{\varepsilon}|^m\quad  \text{ for }\;\; |x|\geq r_{\kappa}.
\end{equation}
To prove the claim,  consider $x \neq 0$. Using Theorem \ref{DPQ1} we get
\begin{align*}
(-\Delta_p)^s \varphi_{\varepsilon}(x) &= -\varepsilon^{p-1}C(\bar \theta) |x|^{\bar \theta (p-1)-sp} 
\\
&\leq \varepsilon^{m} |C(\bar \theta)|\,  |x|^{(\bar\theta-1)(p-1-m) +p-1-sp + t \theta +[(\bar \theta -1)m -t \theta]}
\\
&= |C(\bar \theta)| \, |x|^{(\bar\theta-1)(p-1-m) +p-1-sp + t \theta } (\varepsilon|x|^{\bar \theta -1})^m|x|^{-t \theta}
\\
&=  |C(\bar \theta)|\bar\theta^{-m}\, |x|^{(\bar\theta-1)(p-1-m) +p-1-sp + t \theta } |x|^{-t \theta} |\na \varphi_{\varepsilon}|^m.
\end{align*}
From \eqref{eqneg} we see that $|x|^{(\bar\theta-1)(p-1-m) +p-1-sp + t \theta } < \frac{\upkappa\bar\theta^m}{|C(\bar \theta)|}$ for $|x|$ large enough. Therefore, there exists $r_{\kappa} >0$ large enough so that \eqref{claim} holds.

Now, let $u$ be any positive solution of $(\Delta_p)^s u \geq u^{t} |\Grad u|^m \text{ in } \Rn$. Applying Lemma \ref{L1.3} we see that 
$$(-\Delta_p)^s u \geq \upkappa |x|^{-t \theta} |\na u|^m \quad \text{ for }\;\; |x| >1 \text{ and some } \kappa>0.$$
We use this $\upkappa$ in \eqref{claim} and adjust $r_{\kappa}$ accordingly. Next  we show that
\begin{equation}\label{T5.1D}
u(x) \geq \min_{\bar B_{r_{\upkappa}}} u := \rho_{\upkappa} \text{ in } \Rn.
\end{equation}
To establish \eqref{T5.1D} we define $\tilde \varphi_{\varepsilon} := \rho_{\kappa} + \varphi_{\varepsilon}$. Using \eqref{claim} and
applying the comparison argument as in Lemma~\ref{L1.2} we obtain $u\geq \tilde \varphi_{\varepsilon}$ in 
$\Rn$ for all $\varepsilon>0$. Letting $\varepsilon \rightarrow 0$, this gives us $u(x) \geq \rho_{\kappa}$
in $\Rn$. Thus $u$ attains its minimum in the ball 
$\bar{B}_{r_\upkappa}$. We note that $v:=u-\rho_\upkappa$ also 
solves $(-\Delta_p)^s v\geq 0$ in $\Rn$.
Using Lemma \ref{maxpr} it then follows that $u \equiv \rho_{\upkappa}$, completing the proof. 
\end{proof}

\medskip

{\bf Funding:} This research of M.~Bhakta
is partially supported by a DST Swarnajaynti fellowship (SB/SJF/2021-22/09). A.~Biswas is partially supported by a DST Swarnajaynti fellowship (SB/SJF/2020-21/03).

\bigskip

{\bf Data availability:} Data sharing not applicable to this article as no datasets were generated or analyzed during the current study.

\medskip

{\bf Conflict of interest} The authors have no Conflict of interest to declare that are relevant to the content of this article.

\bigskip

\bibliographystyle{plain}
\bibliography{BBS}

\end{document}